\newtheorem{zeorem}{Theorem}[section]
\newtheorem{zemma}[zeorem]{Lemma}
\newtheorem{zorollary}[zeorem]{Corollary}
\newcommand\blfootnote[1]{%
  \begingroup
  \renewcommand\thefootnote{}\footnote{#1}%
  \addtocounter{footnote}{-1}%
  \endgroup
}
\begin{document}
\title{Kesten's theorem for uniformly recurrent subgroups}
\author{Mikolaj Fraczyk}
\address{Alfr\'ed R\'enyi Institute of Mathematics,  Re\'altanoda utca 13-15, H-1053, Budapest, Hungary}
  \email{fraczyk@renyi.hu}
\date{}
\maketitle
\blfootnote{\today}
\begin{abstract}
We prove an inequality on the difference between the spectral radius of the Cayley graph of a group $G$ and the spectral radius of the Schreier graph $H\backslash G$ for any subgroup $H$. As an application we extend Kesten's theorem on spectral radii to uniformly recurrent subgroups and give a short proof that the result of Lyons and Peres on cycle density in Ramanujan graphs \cite[Theorem 1.2]{LyonsPeres} holds on average. More precisely, we show that if $\mathcal G$ is an infinite deterministic Ramanujan graph then the time spent in short cycles by a random walk of length $n$ is $o(n)$. 
\end{abstract}
\section{Introduction}
\subsection{Kesten theorems}
The spectral radius of a $d$-regular, countable, undirected graph $\mathcal G$ is the operator norm of the Markov averaging operator $P:\ell^2(\mathcal G)\to\ell^2(\mathcal G)$ defined as $Pf(v)=\frac{1}{\deg v}\sum_{v'\sim v}f(v')$. We denote it by $\rho(\mathcal G)$. For connected graphs there is another interpretation of spectral radius via return probabilities. Fix an origin $o$ of $\mathcal G$ and write $A_{\mathcal G}(n)$ for the set of walks starting at $o$ and returning to $o$ after time $n$. Then, the spectral radius of $\mathcal G$ is the limit 
$$\rho(\mathcal G)=\lim_{n\to\infty}\left(\frac{|A_{\mathcal G}(2n)|}{d^{2n}}\right)^{1/2n}.$$ 
Let $G$ be a countable group generated by a symmetric set $S$ and let $H$ be a sugbroup of $G$. Write $\Cay(G,S)$ for the Cayley graph and $\Sch(H\backslash G,S)$ for the Schreier graph on the left $H$-cosets. Once $S$ is fixed we will write $\rho(G)=\rho(\Cay(G,S))$ and $\rho(H\backslash G)=\rho(H\backslash G,S)$. This paper expands on the criteria for amenability given by Kesten \cite{Kesten1, Kesten2}
\begin{zeorem}[Kesten]\label{Kesten1}
Let $G$ be a group generated by a finite symmetric set $S$. Then $G$ is amenable if and only if $\rho(G)=1$. 
\end{zeorem}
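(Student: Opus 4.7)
The plan is to prove both implications by relating the $\ell^2$-spectral behaviour of the Markov averaging operator $P$ on $\ell^2(G)$ to the Følner characterisation of amenability. Since $S$ is symmetric, $P$ is a self-adjoint contraction with $\rho(G)=\|P\|\le 1$; the content of the theorem is that equality exactly detects amenability.

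For the easy implication ``amenable $\Rightarrow \rho(G)=1$'', let $(F_n)$ be a Følner sequence in $G$. The unit vectors $\varphi_n:=|F_n|^{-1/2}\mathbf{1}_{F_n}\in\ell^2(G)$ satisfy
\[
\langle P\varphi_n,\varphi_n\rangle=\frac{1}{|S|\cdot|F_n|}\sum_{s\in S}|F_n\cap sF_n|\to 1,
\]
so $\rho(G)=\|P\|\ge 1$, forcing equality.

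For the converse, suppose $\rho(G)=1$. Then $\|P^2\|=1$ and $P^2$ is positive self-adjoint, so by the spectral theorem there exist unit vectors $f_n\in\ell^2(G)$ with $\|P^2 f_n-f_n\|_2\to 0$. Since $P$ has a non-negative kernel, the pointwise inequality $P|f|\ge|Pf|$ lets us replace $f_n$ with $|f_n|$, and so we may assume $f_n\ge 0$. Writing $P^2$ as the average over $(s,t)\in S\times S$ of the unitary right-translations $R_{st}$ and expanding
\[
\langle P^2 f_n,f_n\rangle=|S|^{-2}\sum_{s,t\in S}\langle R_{st}f_n,f_n\rangle\to 1,
\]
every summand being bounded by $1$ forces $\langle R_\gamma f_n,f_n\rangle\to 1$, and hence $\|R_\gamma f_n-f_n\|_2\to 0$, for each $\gamma\in SS$. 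The subgroup $\langle SS\rangle$ has index at most two in $G$, and amenability passes to finite-index supergroups, so it suffices to produce a Følner sequence for $\langle SS\rangle$.

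The concluding step passes from $\ell^2$ to $\ell^1$: set $h_n:=f_n^2$, a probability density on $G$, and use
\[
\|\gamma\cdot h_n-h_n\|_1\le\|\gamma\cdot f_n-f_n\|_2\cdot\|\gamma\cdot f_n+f_n\|_2\le 2\|\gamma\cdot f_n-f_n\|_2\to 0.
\]
The main obstacle I anticipate is extracting honest Følner sets from these almost invariant $\ell^1$ densities. The layer-cake decomposition $h_n=\int_0^\infty\mathbf{1}_{\{h_n>t\}}\,dt$ and Fubini give
\[
\sum_{\gamma\in SS}\|\gamma\cdot h_n-h_n\|_1=\int_0^\infty\sum_{\gamma\in SS}|\gamma F_t^{(n)}\triangle F_t^{(n)}|\,dt,
\]
where $F_t^{(n)}:=\{h_n>t\}$, and a pigeonhole argument on $t$ yields a super-level set simultaneously almost invariant under every $\gamma\in SS$, completing the construction of a Følner sequence.
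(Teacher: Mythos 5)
The paper does not prove this statement: Theorem \ref{Kesten1} is quoted as a classical result and attributed to Kesten's original papers, so there is no in-paper argument to compare against. Judged on its own, your proof is essentially the standard Kesten--Hulanicki argument and is correct in all its main steps: the F\o lner-to-spectral direction via the unit vectors $|F_n|^{-1/2}\mathbf{1}_{F_n}$; the use of $P^2$ (positive, norm $1$) to extract approximate eigenvectors, which is exactly the right way to dodge the parity obstruction (the possibility that $-1$ rather than $1$ sits at the edge of the spectrum of $P$); the replacement of $f_n$ by $|f_n|$ using positivity of the kernel; the observation that an average of terms each bounded by $1$ tending to $1$ forces each term to tend to $1$; the index-$\le 2$ subgroup $\langle SS\rangle$ and the fact that amenability passes up along finite index; and finally the Namioka layer-cake trick to convert almost-invariant $\ell^1$ densities into F\o lner sets. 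Two small points deserve to be made explicit if you write this out in full. First, the pigeonhole on $t$ needs the normalization $\int_0^\infty |F_t^{(n)}|\,dt=\|h_n\|_1=1$, so that ``$\sum_{\gamma}\|\gamma\cdot h_n-h_n\|_1<\varepsilon$'' really does produce some $t$ with $\sum_\gamma|\gamma F_t^{(n)}\triangle F_t^{(n)}|<\varepsilon|F_t^{(n)}|$ and $F_t^{(n)}$ finite and nonempty. Second, the sets you obtain live in $G$ and are F\o lner for the (free) translation action of $\langle SS\rangle$ on $G$, not yet F\o lner sets inside $\langle SS\rangle$; you should either decompose them along $\langle SS\rangle$-cosets and pigeonhole once more, or invoke the equivalence of amenability with the existence of F\o lner sets for a free action. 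Neither point is a real gap, just detail to be supplied.
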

\begin{zeorem}[Kesten]\label{Kesten2}
Let $G$ be a group generated by a finite symmetric set $S$ and let $H$ be a normal subgroup of $G$. Then $H$ is amenable if and only if $\rho(G)=\rho(H\backslash G)$. 
\end{zeorem}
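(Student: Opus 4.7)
The plan is to prove both directions of the equivalence $H$ amenable $\iff \rho(G) = \rho(H\backslash G)$, exploiting that normality of $H$ makes $\Sch(H\backslash G,S)$ the Cayley graph of the quotient group $G/H$ with generating set $\bar S$. Write $\mu_S = |S|^{-1}\sum_{s\in S}\delta_s$, so $\rho(G) = \|\lambda_G(\mu_S)\|$ and $\rho(H\backslash G) = \|\lambda_{G/H}(\mu_S)\|$, where $\lambda_{G/H}$ denotes the regular representation of $G/H$ pulled back to $G$ via the quotient map. The inequality $\rho(G) \le \rho(H\backslash G)$ is automatic and needs no normality: every word in $S$ evaluating to $e\in G$ also evaluates to $eH\in G/H$, so $|A_G(n)| \le |A_{H\backslash G}(n)|$ and the return-probability formula for $\rho$ gives the bound.

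For the forward direction, assume $H$ is amenable and choose a F\o lner sequence $(F_n)$ in $H$. Set $\xi_n = |F_n|^{-1/2}\mathbf{1}_{F_n}\in\ell^2(G)$, a unit vector supported on $H$. For $g\in G$ one computes $\langle \lambda_G(g)\xi_n,\xi_n\rangle = |gF_n\cap F_n|/|F_n|$, which tends to $1$ when $g\in H$ by the F\o lner condition and equals $0$ when $g\notin H$ (since $gF_n\subset gH$ is disjoint from $H\supset F_n$). These limits are exactly the matrix coefficients of $\lambda_{G/H}$ at its cyclic vector $\delta_{eH}$, so the cyclic subrepresentation generated by $\delta_{eH}$---which is all of $\lambda_{G/H}$---is weakly contained in $\lambda_G$. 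Hence $\|\lambda_{G/H}(\mu_S)\|\le\|\lambda_G(\mu_S)\|$, i.e.\ $\rho(H\backslash G)\le\rho(G)$, so equality holds.

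For the converse, assume $\rho(G) = \rho(H\backslash G) = \rho$. Put $\xi_n = \lambda_G(\mu_S)^n\delta_e/\|\lambda_G(\mu_S)^n\delta_e\|$ and consider the normalized positive-definite functions $\phi_n(g) = \langle \lambda_G(g)\xi_n,\xi_n\rangle = \mu_S^{*2n}(g)/\mu_S^{*2n}(e)$. By Banach--Alaoglu, pass to a pointwise subsequential limit $\phi$ with $\phi(e)=1$. In the GNS triple $(\pi_\phi,\mathcal H_\phi, v_\phi)$ one verifies $\|\pi_\phi(\mu_S)v_\phi\| = \rho = \|\pi_\phi(\mu_S)\|$, so $v_\phi$ lies in the top spectral subspace of the self-adjoint operator $\pi_\phi(\mu_S)$. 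The aim is to conclude $\phi|_H\equiv 1$, equivalently $\pi_\phi(h)v_\phi = v_\phi$ for every $h\in H$: this yields $1_H\prec\lambda_G|_H$, and since $\lambda_G|_H$ is a multiple of $\lambda_H$, one concludes $H$ is amenable.

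The main obstacle is precisely this last step: the hypothesis $\rho(G) = \rho(H\backslash G)$ controls only the total return probability $\mu_S^{*2n}(H)$ relative to $\mu_S^{*2n}(e)$, not the pointwise size of $\phi_n$ on $H$. I would try to close the gap by running the parallel construction in the quasi-regular representation---forming $\xi_n' = \lambda_{G/H}(\mu_S)^n\delta_{eH}/\|\lambda_{G/H}(\mu_S)^n\delta_{eH}\|\in\ell^2(G/H)$, obtaining a top eigenvector there, and then using normality of $H$ to lift it back to an $H$-invariant vector inside $\mathcal H_\phi$ by matching the two GNS constructions. This pull-back via normality, which forces the top eigenvector of the walk operator on $G$ to descend through $G/H$, is the delicate core of Kesten's argument and is the step I expect to demand the most care.
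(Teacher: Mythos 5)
Two preliminary remarks. First, the paper does not actually prove this statement: Theorem \ref{Kesten2} is quoted from Kesten's papers as background, so there is no in-text proof to compare yours with. Second, the first half of your proposal is correct. The inequality $\rho(G)\leq\rho(H\backslash G)$ by projecting closed walks is right, and so is the forward direction: with $\xi_n=|F_n|^{-1/2}\mathbf{1}_{F_n}$ for a F\o lner sequence $(F_n)$ in $H$, the diagonal matrix coefficients $\langle\lambda_G(g)\xi_n,\xi_n\rangle$ converge pointwise to $\mathbf{1}_H$, which is the matrix coefficient of the quasi-regular representation at its cyclic vector $\delta_{eH}$; hence $\lambda_{G/H}$ is weakly contained in $\lambda_G$ and $\rho(H\backslash G)\leq\rho(G)$. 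As the paper notes right after the statement, this direction needs no normality.

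The converse, however, is not proved, and the gap you flag yourself is fatal rather than technical. Your positive-definite function $\phi=\lim_k\phi_{n_k}$, with $\phi_n(g)=\mu_S^{*2n}(g)/\mu_S^{*2n}(e)$, is built from the random walk on $G$ alone: neither $H$ nor the hypothesis $\rho(G)=\rho(H\backslash G)$ enters its construction, so no spectral analysis of $\pi_\phi(\mu_S)$ can by itself yield $\pi_\phi(h)v_\phi=v_\phi$ for $h\in H$. The hypothesis only controls $\mu_S^{*2n}(H)=\sum_{h\in H}\mu_S^{*2n}(h)$, which has the same exponential rate $\rho^{2n}$ as $\mu_S^{*2n}(e)$; since both quantities may differ from $\rho^{2n}$ by unbounded subexponential factors, this gives no lower bound on the individual ratios $\mu_S^{*2n}(h)/\mu_S^{*2n}(e)$, and your proposed repair (``matching the two GNS constructions'' and lifting a top eigenvector of $\lambda_{G/H}(\mu_S)$, which in general exists only approximately) is a restatement of what must be proved, not an argument. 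If you want a proof in the spirit of this paper, specialize Theorem \ref{t.SpecIneq}: for normal $H$ one has $H^g=H$ for every $g$, the integrand is constant, so $I(H,S)=|H\cap S|\left(-\log\rho(H,H\cap S)\right)$ and the equality $\rho(G,S)=\rho(H\backslash G,S)$ forces either $H\cap S=\emptyset$ or $\rho(\langle H\cap S\rangle,H\cap S)=1$, the latter meaning $\langle H\cap S\rangle$ is amenable by Theorem \ref{Kesten1}; replacing $S$ by $(\{1\}\cup S)^m$ as at the end of the proof of Theorem \ref{ThmA} and letting $m\to\infty$ exhibits $H$ as an increasing union of amenable subgroups, hence amenable.
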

Note that Theorem \ref{Kesten1} follows from Theorem \ref{Kesten2} applied to $H=G$. If $H$ is amenable then $\rho(G)=\rho(H\backslash G)$ holds unconditionally but the converse is not true in general. We shall say that a subgroup $H$ of $G$ is a \textbf{Ramanujan subgroup} with respect to $S$ if $\rho(G)=\rho(H\backslash G)$. The space $\Sub_G$ of subgroups of a locally compact group $G$ is endowed with a natural topology called the Chabouty topology \cite{dlHarpe1}. An \textbf{invariant random subgroup}, IRS for short, is a $\Sub_G$ valued random variable whose ditribution is invarinat under conjugation by $G$. In \cite{AGV14} Abert, Glasner and Virag proved a probabilistic version of Kesten's theorem for IRS'ses: 
\begin{zeorem}[Abert,Glasner,Virag]\label{KestenIRS}
Let $G$ be a group generated by a finite symmetric set $S$ and let $H$ be an invariant random subgroup of $G$. Then $H$ is amenable almost surely if and only if $H$ is Ramanujan almost surely.
\end{zeorem}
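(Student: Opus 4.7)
The direction ``amenable $\Rightarrow$ Ramanujan'' does not use the IRS hypothesis at all and is classical: a F\o lner sequence of $H$ pushed forward to $G/H$ gives approximately $P$-invariant vectors in $\ell^2(H\backslash G)$ orthogonal to the constants, so $\rho(H\backslash G)=\rho(G)$. The entire content of the theorem is the converse, where the IRS hypothesis is genuinely needed since for a single non-normal subgroup the converse can fail.

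My plan for the converse is to encode the random subgroup by the function $\phi\colon G\to [0,1]$, $\phi(g)=\Pr(g\in H)$. Two properties of $\phi$ are essential. First, $\phi$ is positive definite, being a convex combination (over the IRS law) of the indicator functions $\mathbf 1_K$ of subgroups $K\le G$, each of which is positive definite by a direct computation grouping indices by left $K$-cosets. Second, $\phi$ is a class function, since $\Pr(g\in H)=\Pr(g\in xHx^{-1})=\phi(x^{-1}gx)$ by conjugation invariance of the IRS. The GNS construction then produces a cyclic unitary representation $(\pi,\mathcal H,\xi)$ with $\phi(g)=\langle\pi(g)\xi,\xi\rangle$ and $\xi$ tracial for the von Neumann algebra $\pi(G)''$, and a direct swap of summation and expectation yields
$$
\mathbb E\bigl[p_{2n}^{H\backslash G}(He,He)\bigr]\;=\;\sum_{g\in G}p_{2n}^G(e,g)\,\phi(g)\;=\;\langle\pi(P)^{2n}\xi,\xi\rangle,
$$
where $P=|S|^{-1}\sum_{s\in S}s$ is the Markov operator.

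The Ramanujan hypothesis combined with the uniform bound $p_{2n}\le 1$ gives, via a standard Fatou-type estimate, $\liminf_n\langle\pi(P)^{2n}\xi,\xi\rangle^{1/(2n)}\ge\rho(G)$, and the direct-integral decomposition $\pi\cong\int^{\oplus}\lambda_{H\backslash G}\,d\mu(H)$ pins down $\|\pi(P)\|=\rho(G)$. The last and hardest step is to deduce almost-sure amenability of $H$ from this identity together with the traciality of $\xi$. The operator-norm equality alone only restates the hypothesis, so the conjugation invariance of $\phi$ must be used essentially. My plan is to average approximate $\rho(G)$-eigenvectors of $\pi(P)$ against the conjugation action of $G$ and then invoke a mass-transport argument for IRSes to extract from these averages a F\o lner sequence for the group $H$ itself, thereby forcing the Markov operator on the Cayley-type graph of $H$ to have spectral radius $1$. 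Making this final reduction rigorous is the crux of the proof, and it is precisely where the IRS hypothesis is essential, the converse being known to fail for a general deterministic subgroup.
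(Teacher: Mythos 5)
The paper does not actually prove this theorem; it quotes it from [AGV14] and instead proves a quantitative strengthening, Theorem \ref{t.SpecIneq}, from which the hard direction follows by integrating the inequality against the IRS law: conjugation invariance gives $\int\!\!\int |H^g\cap S|(-\log\rho(H^g,S\cap H^g))\,d\mu_{2n}(g)\,d\mathbb{P}(H)=\mathbb{E}\bigl[|H\cap S|(-\log\rho(H,S\cap H))\bigr]$, so $\rho(H\backslash G)=\rho(G)$ a.s.\ forces $\langle H\cap S\rangle$ to be amenable a.s., and replacing $S$ by $S^m$ and letting $m\to\infty$ gives amenability of $H$. Your setup through the identity $\mathbb{E}[p_{2n}^{H\backslash G}(He,He)]=\langle\pi(P)^{2n}\xi,\xi\rangle$ is correct and is a genuinely different (operator-algebraic) framing, but the proof is not complete, and there are two problems.

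First, a repairable slip in the easy direction: a F\o lner sequence for $H$ does \emph{not} produce approximately $P$-invariant vectors in $\ell^2(H\backslash G)$ --- that would prove $\rho(H\backslash G)=1$, which is false already for $H=\langle a\rangle$ in $F_2=\langle a,b\rangle$. The correct mechanism is weak containment: amenability of $H$ gives $1_H\prec\lambda_H$, hence $\lambda_{H\backslash G}=\mathrm{Ind}_H^G 1_H\prec\mathrm{Ind}_H^G\lambda_H=\lambda_G$, so $\rho(H\backslash G)\le\rho(G)$, and the reverse inequality always holds. Second, and decisively, the converse is not proved: everything up to and including $\|\pi(P)\|=\rho(G)$ is, as you yourself note, only a reformulation of the hypothesis, and the step from there to amenability of $H$ is left as a ``plan'' (averaging approximate eigenvectors, then a mass-transport argument producing a F\o lner sequence for $H$) with no actual mechanism supplied. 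The missing mechanism is precisely the point where $\rho(H\backslash G)$ gets compared with return probabilities \emph{inside} $H$: in [AGV14] and in this paper this is done combinatorially, by grouping recurrent walks on $G$ into classes $C(w)$ according to their coset trajectory and bounding $|C(w)|/|C(w)\cap A(n,S)|$ from below by $\prod_{t\in T(w)}\rho(H^{w(t)},S\cap H^{w(t)})^{-1}$; nothing in your GNS picture plays this role. Even granting that some F\o lner-type conclusion could be extracted for $\langle H\cap S\rangle$, you would still need the bootstrap over the generating sets $S^m$ to conclude that $H$ itself is amenable, which the proposal does not address. As it stands the crux of the theorem is asserted rather than proved.
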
 
In other words an IRS is Ramanujan if and only if it is amenable. We refer to the article \cite{AGV14} for other properties of IRS'ses. In the present paper we develop a quantitative version of Kesten's theorem that works for for any (deterministic) subgroup $H$ of $G$. Our main technical result (Theorem \ref{t.SpecIneq}) is the inequality
\begin{equation}\label{MainIneq}
\log \rho(H\backslash G,S)-\log\rho(G,S)\geq \frac{\limsup_{n\to\infty}\int |H^g\cap S|(-\log\rho(H^g,S\cap H^g))d\mu_{2n}(g)}{|S|^2\rho(G,S)^2},
\end{equation} where $\mu_{2n}$ is averaging measure over traces of recurrent random walks of length $2n$ (for definition see equation \ref{d-tracemeasure}). 

\subsection{Uniformly recurrent subgroups} 
Uniforlmy recurrent subgroup is a toplogical analogue of an ergodic IRS.  Write ${\rm Sub}_{G}$ for the set of subgroups of a group $G$ endowed with the Chabauty topology \cite{dlHarpe1}. Let $H$ be a subgroup of $G$ and let $X$ be the closure of the congujacy class of $H$ in ${\rm Sub}_G$. Subgroup $H$ is called a \textbf{ uniformly recurrent subgroup } (URS for short) if $X$ is minimal as a dynamical $G$-system i.e. every $G$-orbit in $X$ is dense. The notion of URS was introduced by Glasner and Weiss in \cite{URS} and was further studied in recent papers \cite{Elek, MatTod}. Uniformly recurrent subgroups atracted some attention since Kennedy \cite{Kennedy} proved that a countable group $G$ is $C^*$-simple if and only if it has no amenable URS.

We prove that the natural extension of Kesten's theorem holds for URS'ses.
\begin{zeorem}\label{ThmA}
Let $G$ a countable group generated by a finite set $S$ and let $H$ be an URS of $G$. Then $H$ is amenable if and only if $\rho(G)=\rho(H\backslash G)$.
\end{zeorem}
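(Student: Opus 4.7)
The forward direction, that amenability of $H$ implies $\rho(G)=\rho(H\backslash G)$, is the unconditional classical statement noted before Theorem \ref{Kesten2}. For the converse, assume $H$ is a URS satisfying $\rho(G)=\rho(H\backslash G)$. Substituting this equality in \eqref{MainIneq} and using non-negativity of the integrand, I obtain
\begin{equation*}
\limsup_{n\to\infty}\int |H^g\cap S|\bigl(-\log\rho(H^g,S\cap H^g)\bigr)\,d\mu_{2n}(g)=0.
\end{equation*}

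The next step is to convert this into a statement about an invariant random subgroup. Push $\mu_{2n}$ forward along $g\mapsto H^g$ to probability measures $\nu_n$ on the compact space $\Sub_G$, and take a weak* subsequential limit $\nu$. Following the template of \cite{AGV14}, the asymptotic left-invariance of the random walk averaging measures $\mu_{2n}$ translates to conjugation-invariance of $\nu$, so $\nu$ is an IRS. Its support is a nonempty closed $G$-invariant subset of the Chabauty closure of the conjugacy class of $H$; by the URS hypothesis this closure is the minimal $G$-system $X$, so $\mathrm{supp}(\nu)=X\ni H$. The integrand $K\mapsto|K\cap S|(-\log\rho(K,S\cap K))$ is lower semi-continuous on $\Sub_G$ (the combinatorial factor stabilizes under Chabauty convergence, while $\rho(K,S\cap K)$ is upper semi-continuous via a Fekete argument on return probabilities), so the vanishing limsup passes to $\nu$, giving
\begin{equation*}
\int|K\cap S|\bigl(-\log\rho(K,S\cap K)\bigr)\,d\nu(K)=0.
\end{equation*}
Hence, by Kesten's theorem applied inside each $K$, $\nu$-almost surely the subgroup $\langle K\cap S\rangle$ is amenable. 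Combining this with conjugation-invariance of $\nu$ and countability of $G$, I obtain the stronger statement: $\nu$-a.s., for every $g\in G$, the subgroup $\langle K\cap g^{-1}Sg\rangle\leq K$ is amenable.

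The main obstacle, as I see it, is to upgrade this piecewise amenability to amenability of $K$ itself for $\nu$-a.e.\ $K$. My plan is to apply Theorem \ref{KestenIRS} of Abert--Glasner--Virag to the IRS $\nu$, which requires establishing $\rho(K\backslash G)=\rho(G)$ for $\nu$-a.e.\ $K$. Two natural routes suggest themselves: one may reapply \eqref{MainIneq} to a $\nu$-typical $K$ (each such $K$ shares the same URS closure $X$) and bootstrap using the piecewise amenability above; alternatively, one may exploit that $\nu$ is a weak* limit of probability measures supported on Ramanujan conjugates $gHg^{-1}$ together with a Benjamini--Schramm continuity statement for Schreier spectral radii.

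Once $\nu$-a.s.\ amenability is in hand, the final step is to transfer amenability from $\nu$-generic $K$ to the specific subgroup $H$. The key observation is that amenability of subgroups of a countable discrete group is closed under Chabauty limits: if $K_n\to K$ in Chabauty and each $K_n$ is amenable, then any finitely generated subgroup of $K$ sits inside $K_n$ for all large $n$, hence is amenable, so $K$ is amenable too (amenability being closed under directed unions). Therefore the amenable locus in $X$ is closed, $G$-invariant, and of full $\nu$-measure; minimality of $X$ forces it to equal $X$, so in particular $H$ is amenable.
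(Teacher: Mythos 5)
There is a genuine gap, and it sits exactly at the point you flag as the ``main obstacle''. Your plan hinges on the claim that the weak* limit $\nu$ of the pushforwards of $\mu_{2n}$ is conjugation-\emph{invariant}, i.e.\ an IRS, so that Theorem \ref{KestenIRS} can be applied. This is false in the only case that matters: if $G$ is non-amenable (otherwise the theorem is vacuous), the occupation measures $\mu_{2n}$ are not asymptotically invariant --- the ratio $|A(2k,S)|/|A(2k+2,S)|$ tends to $|S|^{-2}\rho(G,S)^{-2}>1$, so translation by $s\in S$ distorts mass by a factor bounded away from $1$, and all one can extract in the limit is \emph{quasi}-invariance (this is precisely the paper's Lemma \ref{l.QINV}). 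Worse, a minimal Chabauty system $X$ of a non-amenable group need not carry any invariant measure at all, so no averaging scheme can produce an IRS supported on $X$; the route through Abert--Glasner--Virag is structurally unavailable, not just unproven. Since you leave the upgrade from ``$\langle K\cap g^{-1}Sg\rangle$ amenable for all $g$'' to ``$K$ amenable'' as a sketch of ``two natural routes'', the proof is incomplete precisely where the real work lies. The paper's resolution uses only quasi-invariance: the support of $\omega$ is closed and $G$-invariant, hence equals $X$ by minimality; the set $\{K:\langle K\cap S\rangle\text{ non-amenable}\}$ is open and $\omega$-null, hence empty, so $\langle K\cap S\rangle$ is amenable for \emph{every} $K\in X$; and replacing $S$ by $S'=(\{1\}\cup S)^m$ and letting $m\to\infty$ exhausts each $K\in X$ by an increasing union of amenable subgroups. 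Note that the conjugates $g^{-1}Sg$ you propose do not exhaust $K$, whereas the powers $(\{1\}\cup S)^m$ do --- this substitution is the missing idea.

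Two smaller remarks. Your semicontinuity discussion is both unnecessary and reversed: by the paper's convention $\rho(K,S\cap K)=\rho(\langle K\cap S\rangle,S\cap K)$ depends only on the finite set $K\cap S$, so the integrand is locally constant on $\Sub_G$ and the vanishing passes to the limit trivially; and in general the Fekete/supermultiplicativity argument expresses $\rho$ as a supremum of locally constant functions, making it \emph{lower}, not upper, semicontinuous. On the other hand, your closing observation --- that for a countable discrete group the amenable subgroups form a Chabauty-closed set, via finitely generated subgroups and directed unions --- is correct and is essentially the mechanism behind the paper's final exhaustion step; but it only becomes useful once amenability of every $K\in X$ (or of a sequence converging to $H$) has been established, which your argument does not reach.
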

It was already shown in \cite{URS} that an URS $H$ amenable if and only if every subgroup in $X$ is. Similarly an URS $H$ is Ramanujan if and only if $X$ contains only Ramanujan subgroups.
\subsection{Cycle density in Ramanujan graphs}
Let $(G,x)$ be a $d$-regular Ramanujan graph and let $k\geq 1$ be a fixed integer. For any $n\geq 0$ write $q_n$ for the probability that a random walk starting at $x$ lies at time $n$ on a cycle of length at most $k$. In \cite{LyonsPeres} Lyons and Peres proved that $\lim_{n\to\infty}q_n=0$. Their result was motivated by \cite[Problem 11]{AGV12}. Using inequality \ref{MainIneq} we show (Theorem \ref{T1S3}) that
$$\lim_{n\to\infty}\frac{1}{n}\sum_{j=1}^n q_j=0.$$
In other words the random walks on a Ramanujan graph do not spend much time in the short cycles. This gives a relatively simple proof that the conclusion of \cite[Theorem 1.2]{LyonsPeres} holds on average.
\subsection{Organisation of the paper.} Section \ref{S1} is devoted to the proof of (\ref{t.SpecIneq}). We follow closely the argument from \cite{AGV14}. Our main contribution is an application of the inequality between the arithmetic and the geometric means at certain point of the proof, similarly to how it was used in \cite{AGV12}. 
In Section \ref{S2}we apply \ref{t.SpecIneq} to prove Theorem \ref{ThmA}. In section \ref{S3} we give a relatively short proof that  the conclusion of \cite[Theorem 1.2]{LyonsPeres} holds on average.
\subsection*{Acknowledgment.} Part of this work was included in the author's master thesis supervised by Emmanuel Breuillard whom I thank for suggesting the topic. I would also like to thank Miklos Abert for bringing my attention to the work of Russel Lyons and Yuval Peres. The author was partially supported by the ERC Consolidator Grant No. 648017 and partially by a public grant ANR-11-LABX-0056-LMH.
\section{Inequality on the spectral radii}\label{S1}
Let $G$ be a group generated by a finite symmetric set $S$. Recall that for a subgroup $H$  of $G$ we write $\Sch(H\backslash G,S)$ for the Schreier graph encoding the action of generators from $S$ on the coset space  $H\backslash G$. Group $G$ acts from the right on $\Sch(H\backslash G,S)$. Write $A(n,S),A_H(S,n)$ for the sets of walks on $\Cay(G,S)$ respectively $\Sch(H\backslash G,S)$ that return to the identity after $n$ steps. We identify the walks of length $n$ with sequences in $S^n$. For a walk $w=(a_1,\ldots,a_n)$ we write $w(i)=a_1a_2\ldots a_i$ for the position after $i$ steps. If $H$ is a subgroup of $G$ and $F\subset H$ is a finite subset we adopt the convention that $\rho(H,F)=\rho(\langle H\cap F\rangle,F)$ in the case when $F$ does not generate $H$. We will write $H^g=g^{-1}Hg$.

Define the measure
\begin{equation}\label{d-tracemeasure} \mu_n=\frac{1}{|A(n,S)|}\sum_{w\in A(n,S)}\left(\frac{1}{n}\sum_{i=1}^n \delta_{w(i)}\right).\end{equation}
Intuitively $\mu_n(g)$ is the expected proportion of time spent in $g$ by a random recurrent walk of length $n$.  
We have 
\begin{zeorem}\label{t.SpecIneq} Let $I(H,S)=\limsup_{n\to\infty}\int |H^g\cap S|(-\log\rho(H^g,S\cap H^g))d\mu_{2n}(g)$. Then
\begin{equation}
\log \rho(H\backslash G,S)-\log\rho(G,S)\geq \frac{I(H,S)}{|S|^2\rho(G,S)^2}.
\end{equation}
\end{zeorem}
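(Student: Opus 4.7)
The plan is to follow the Abert--Glasner--Virag strategy from the proof of Theorem \ref{KestenIRS}, recast as a quantitative deterministic statement, and to insert an AM--GM step at the critical place (as hinted in the Organisation subsection). Since $\rho(G)=\lim_{n\to\infty}(|A(2n,S)|/|S|^{2n})^{1/(2n)}$ and the analogous formula holds for $\rho(H\backslash G)$, and since the trivial inclusion $A(2n,S)\hookrightarrow A_H(2n,S)$ gives $\rho(G)\leq \rho(H\backslash G)$ unconditionally, the desired inequality will follow once we show
$$\liminf_{n\to\infty}\frac{1}{2n}\log\frac{|A_H(2n,S)|}{|A(2n,S)|}\;\geq\;\frac{I(H,S)}{|S|^2\,\rho(G)^2}.$$

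The combinatorial core is an enrichment construction. For $w\in A(2n,S)$ and a time $i\in\{1,\ldots,2n\}$, the generators in $S\cap H^{w(i)}$ are exactly the loops at the Schreier vertex $Hw(i)$; thus, modifying a short subword of $w$ around time $i$ with generators from $S\cap H^{w(i)}$ produces a walk of the same length which still terminates in the coset $H$, and therefore lies in $A_H(2n,S)$ but generally not in $A(2n,S)$. The number of such substitutions at position $i$ grows like $(|S\cap H^{w(i)}|\rho(H^{w(i)},S\cap H^{w(i)}))^{2m}$ for a modification of length $2m$, while the cost of replacing a length-$2m$ subword in $\Cay(G,S)$ is of order $(|S|\rho(G))^{2m}$. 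Careful bookkeeping to avoid producing the same output walk from several inputs yields an inequality of the form
$$\frac{|A_H(2n,S)|}{|A(2n,S)|}\;\geq\;\frac{1}{|A(2n,S)|}\sum_{w\in A(2n,S)}\prod_{i=1}^{2n}\bigl(1+c(w(i))\bigr),$$
with $c(g)$ proportional to $|S\cap H^g|(-\log\rho(H^g,S\cap H^g))/(|S|^2\rho(G)^2)$.

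The AM--GM step then converts this product into a sum: $\prod_i(1+c_i)\geq \exp(\sum_i\log(1+c_i))\geq \exp(\sum_i c_i/(1+c_i))$ using $\log(1+x)\geq x/(1+x)$, and Jensen's inequality applied to $\exp$ gives
$$\frac{1}{2n}\log\frac{|A_H(2n,S)|}{|A(2n,S)|}\;\geq\;\frac{1}{2n|A(2n,S)|}\sum_{w\in A(2n,S)}\sum_{i=1}^{2n}\frac{c(w(i))}{1+c(w(i))}.$$
The double average on the right is exactly integration against the trace measure $\mu_{2n}$, so taking $\limsup_n$ and using a uniform bound on $c(g)$ (from Kesten's bound $\rho(H^g,S\cap H^g)\geq \sqrt{2|S\cap H^g|-1}/|S\cap H^g|$, so that $c(g)$ is bounded) identifies the right-hand side with a positive multiple of $I(H,S)/(|S|^2\rho(G)^2)$. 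Optimising the substitution scheme should give the exact constant $1$.

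The main obstacle is the enrichment-and-counting step: one must design the substitution so that (i) distinct $(w,i,\text{substitution})$ triples give distinct walks in $A_H(2n,S)$, and (ii) the spectral factors appear with exactly the normalisation $|S|^2\rho(G)^2$. The AM--GM step itself, borrowed from \cite{AGV12}, is technically simple but conceptually decisive: it is what converts the multiplicative content of the enrichment inequality into the additive statement involving the trace measure $\mu_{2n}$.
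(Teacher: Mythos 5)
Your high-level skeleton is right: reduce to a lower bound on $\frac{1}{2n}\log(|A_H(2n,S)|/|A(2n,S)|)$, obtain a product over times from a combinatorial comparison of walks, and use the arithmetic--geometric mean inequality to convert that product into an integral against $\mu_{2n}$. But the central combinatorial claim --- that an enrichment/substitution scheme yields $\frac{|A_H(2n,S)|}{|A(2n,S)|}\geq\frac{1}{|A(2n,S)|}\sum_{w}\prod_i\bigl(1+c(w(i))\bigr)$ with $c(g)$ already proportional to $|S\cap H^g|(-\log\rho(H^g,S\cap H^g))$ --- is asserted rather than proved, and I do not see how it could be proved in that form. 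A counting argument of the kind you describe produces multiplicative factors that are powers of $\rho$ and of $|S\cap H^g|$ (for instance, the number of length-$2m$ returning words in $\langle S\cap H^g\rangle$ is about $(|S\cap H^g|\rho(H^g,S\cap H^g))^{2m}$); a logarithm of $\rho$ cannot appear in a counting bound before logarithms are taken. In the paper the logarithm appears only \emph{after} the AM--GM step: one first proves, via the class decomposition $C(w)$ of \cite{AGV14} (walks with the same Schreier trajectory and the same steps at coset-changing times), that $|C(w)|/|C(w)\cap A(n,S)|\geq\prod_{t\in T(w)}\rho(H^{w(t)},S\cap H^{w(t)})^{-1}$, sums this over $w$, applies AM--GM to the average of these products, and only then takes logs to obtain $\sum_{t\in T(w)}(-\log\rho)$. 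Your proposal collapses these two stages into a single unproven inequality, which is where the actual content of the theorem lives.

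A second, smaller gap: even granting your enrichment inequality, your route delivers $\int \frac{c}{1+c}\,d\mu_{2n}$ and hence only a positive multiple of $I(H,S)/(|S|^2\rho(G,S)^2)$, not the stated constant $1$; the phrase ``optimising the substitution scheme'' is carrying real weight. In the paper the factor $|S\cap H^g|$ and the normalisation $|S|^2\rho(G,S)^2$ come from a concrete step you would need to reproduce: after AM--GM one bounds, for each fixed time $t$, the sum $\sum_{w\in A(n,S)}\mathbf{1}_{T(w)}(t)(-\log\rho(H^{w(t)},H^{w(t)}\cap S))$ from below by inserting a pair $h,h^{-1}$ with $h\in H^{w(t-2)}\cap S$ into walks of length $n-2$ (this produces the factor $|S\cap H^g|$ and converts the sum into an integral against $\mu_{n-2}$), and then compares normalisations via $|A(2n,S)|\leq|S|^2\rho(G,S)^2|A(2n-2,S)|$. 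Your ``cost of replacing a length-$2m$ subword'' heuristic gestures at this but pins down neither the injectivity of the substitution map nor the exact constants, so as written the argument does not close.
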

\begin{proof}
We follow closely the strategy from \cite{AGV14}. For a walk $w\in S^n$ we will write $T(w)=\{t\in\{1,\ldots,n\}| H w(t-1)=H w(t)\}$. It is the set of times where a walks doesn't change the $H$-coset.  For each walk we define its class $C(w)$ as $$C(w)=\{w'\in S^n| T(w')=T(w) \textrm{ and } w'(t-1)^{-1}w'(t)=w(t-1)^{-1}w(t) \textrm{ for } t\not\in T(w)\}.$$ Two walks are in the same class if they follow the same trajectories on $H\backslash G$ and every time they change $H$-coset they move by the same element of $S$.
For every walk $w\in A(n,S)$  have $C(w)\in A_H(n,S)$ so
$$|A_H(n,S)|\geq \sum_{w\in A(n,S)}\frac{|C(w)|}{|C(w)\cap A(n,S)|}.$$
By \cite[Lemma 9]{AGV14} and the argument from \cite[Lemma 21]{AGV14} we have 
$$\frac{|C(w)|}{|C(w)\cap A(n,S)|}\geq \prod_{t\in T(w)}\rho(H^{w(t)},S\cap H^{w(t)})^{-1}.$$
Now is when we diverge from the argument from \cite{AGV14}. Using the inequality between arithmetic and geometric means we get 
\begin{align*}\frac{|A_H(n,S)|}{|A(n,S)|}\geq &\frac{1}{|A(n,S)|}\sum_{w\in A(n,S)}\prod_{t\in T(w)}\rho(H^{w(t)},S\cap H^{w(t)})^{-1}\\ \geq &\left(\prod_{w\in A(n,S)}\prod_{t\in T(w)}\rho(H^{w(t)},S \cap H^{w(t)})^{-1}\right)^{\frac{1}{|A(n,S)|}}.
\end{align*} Take logarithms of both sides
\begin{align*}\log|A_H(n,S)|-\log|A(n,S)|\geq & \frac{1}{|A(n,S)|}\sum_{w\in A(n,S)}\sum_{t\in T(w)}-\log\rho(H^{w(t)},H^{w(t)}\cap S)\\
= &\frac{1}{|A(n,S)|}\sum_{t=1}^n\sum_{w\in A(n,S)}-\mathbf{1}_{T(w)}(t)\log\rho(H^{w(t)},H^{w(t)}\cap S).\\
\end{align*} We can estimate the rightmost sum by counting for each $t\in\{2,\ldots,n\}$ only the walks of form $w=(s_1,\ldots,s_{t-2},h,h^{-1},s_{t+1},\ldots,s_n)$ with $h\in H^{w(t-2)}\cap S$ and $(s_1,\ldots,s_{t-2},s_{t+1},\ldots,s_n)\in A(n-2,S)$. Thus, for $t\in \{2,\ldots,n\}$ we have 
\begin{align*}
-\sum_{w\in A(n,S)}\mathbf{1}_{T(w)}(t)\log\rho(H^{w(t)},H^{w(t)}\cap S)\geq -\sum_{w\in A(n-2,S)}|H^{w(t-2)}\cap S|\log\rho(H^{w(t-2)},H^{w(t-2)}\cap S)
\end{align*}
We plug it into our previous estimate to get
\begin{align*}
\log|A_H(n,S)|-\log|A(n,S)|\geq &\frac{-1}{A(n,S)}\sum_{w\in A(n-2,S)}\sum_{t=1}^{n-2}|H^{w(t)}\cap S|\log\rho(H^{w(t)},H^{w(t)}\cap S)\\
=& \frac{-(n-2)|A(n-2,S)|}{|A(n,S)|}\int |H^{w(t)}\cap S|\log\rho(H^{w(t)},H^{w(t)}\cap S)d\mu_{n-2}(g)\\
\end{align*}
We divide both sides by $n$ to get 
\begin{equation*}
\frac{\log|A_H(n,S)|}{n}-\frac{\log|A(n,S)|}{n}\geq \frac{-(n-2)|A(n-2,S)|}{n|A(n,S)|}\int |H^{w(t)}\cap S|\log\rho(H^{w(t)},H^{w(t)}\cap S)d\mu_{n-2}(g)\\
\end{equation*}
Replace $n$ by $2n$ and take limes superior of both sides as $n\to\infty$
\begin{equation*}
\log\rho(H\backslash G,S)-\log\rho(G,S)\leq \limsup_{n\to\infty}\frac{-|A(2n-2,S)|}{|A(2n,S)|}\int |H^{w(t)}\cap S|\log\rho(H^{w(t)},H^{w(t)}\cap S)d\mu_{2n-2}(g)\\
\end{equation*}
It remains to estimate the fraction on the right side of the integral. Let $P:l^2(G)\to l^2(G)$ be the transition operator of the random walk on $\Cay(G,S)$. Then $$|A(2n,S)|=|S|^{2n}\langle P^{2n}\mathbf{1}_e,\mathbf{1}_e\rangle=|S|^{2n}\|P^n\mathbf{1}_e\|_{2}^2\leq |S|^{2n}\|P\|^2\|P^{n-2}\mathbf{1}_e\|_{2}^2=|S|^2\rho(G,S)^2|A(2n-2,S)|$$
Hence 
\begin{equation*}
\log\rho(H\backslash G,S)-\log\rho(G,S)\geq\frac{\limsup_{n\to\infty}\int |H^{w(t)}\cap S|(-\log\rho(H^{w(t)},H^{w(t)}\cap S))d\mu_{2n}(g)}{|S|^2\rho(G,S)^2}
\end{equation*}
\end{proof}
\section{Application to uniformly recurrent subgroups}\label{S2}

In this section we prove Theorem \ref{ThmA}.
For the proof define the probability measures $\nu_n$ as 
$$\nu_n=\frac{1}{n}\sum_{k=n+1}^{2n}\mu_{2k}.$$ It has the advantage of being asymptotically quasi invariant with respect to the action of $G$.
\begin{zemma}\label{l.QINV} For any $s\in S$ and any subset $A$ of $G$ we have 
$$\nu_n(As)\geq \frac{n\nu(A)}{(n+1)|S^2|\rho(G,S)^2}-\frac{1}{n}.$$
\end{zemma}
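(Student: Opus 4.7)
The plan is to establish a one-step recursion $\mu_{2k+2}(As)\geq\frac{k}{(k+1)|S|^{2}\rho(G,S)^{2}}\mu_{2k}(A)$ and then average it against the weights defining $\nu_n$. To prove the recursion, I reuse the splicing idea from the end of the proof of Theorem~\ref{t.SpecIneq}: given $w\in A(2k,S)$ and $i\in\{1,\dots,2k\}$ with $w(i)\in A$, insert the pair $(s,s^{-1})$ immediately after step $i$ to obtain $w'\in A(2k+2,S)$ with $w'(i+1)=w(i)s\in As$. The assignment $(w,i)\mapsto(w',i+1)$ is injective because $(s,s^{-1})$ sits at the prescribed position $(i+1,i+2)$ and can be removed to recover $(w,i)$. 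Counting weighted pairs gives $2k\,|A(2k,S)|\,\mu_{2k}(A)\leq(2k+2)\,|A(2k+2,S)|\,\mu_{2k+2}(As)$, and combining with the bound $|A(2k+2,S)|\leq|S|^{2}\rho(G,S)^{2}|A(2k,S)|$ already proved in Theorem~\ref{t.SpecIneq} yields the recursion.

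Substituting the recursion into $\nu_n(As)=\frac{1}{n}\sum_{k=n+1}^{2n}\mu_{2k}(As)$ and reindexing by $j=k-1$ gives
\begin{equation*}
\nu_n(As)\geq\frac{1}{n|S|^{2}\rho(G,S)^{2}}\sum_{j=n}^{2n-1}\frac{j}{j+1}\mu_{2j}(A)\geq\frac{1}{(n+1)|S|^{2}\rho(G,S)^{2}}\sum_{j=n}^{2n-1}\mu_{2j}(A),
\end{equation*}
where the second inequality uses $j/(j+1)\geq n/(n+1)$ for $j\geq n$. The shifted sum differs from $n\nu_n(A)$ only by the two boundary terms, namely $\sum_{j=n}^{2n-1}\mu_{2j}(A)=n\nu_n(A)+\mu_{2n}(A)-\mu_{4n}(A)$; discarding the nonnegative contribution $\mu_{2n}(A)$ gives
\begin{equation*}
\nu_n(As)\geq\frac{n\nu_n(A)}{(n+1)|S|^{2}\rho(G,S)^{2}}-\frac{\mu_{4n}(A)}{(n+1)|S|^{2}\rho(G,S)^{2}}.
\end{equation*}

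Finally, to absorb the correction into $-1/n$ it suffices to check that $|S|^{2}\rho(G,S)^{2}\geq 1$, since $\mu_{4n}(A)\leq 1$. This follows from the trivial lower bound $|A(2m,S)|\geq|S|^{m}$ (every word $(s_1,s_1^{-1},\dots,s_m,s_m^{-1})$ returns to the identity), which by the spectral radius formula gives $\rho(G,S)\geq|S|^{-1/2}$, hence $|S|^{2}\rho(G,S)^{2}\geq|S|\geq 1$. There is no real obstacle in the argument: the content is entirely the single splicing injection, and everything else is bookkeeping of the boundary terms introduced by the index shift between $\nu_n(As)$ and $\nu_n(A)$.
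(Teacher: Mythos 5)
Your proof is correct and follows essentially the same route as the paper: the same splicing injection $(w,i)\mapsto(w',i+1)$ inserting $(s,s^{-1})$ yields the one-step recursion $\mu_{2k+2}(As)\geq\frac{k}{(k+1)|S|^{2}\rho(G,S)^{2}}\mu_{2k}(A)$, which is then averaged over the range defining $\nu_n$ with the boundary term absorbed into $-1/n$. If anything, your bookkeeping of the index shift and the explicit check that $|S|^{2}\rho(G,S)^{2}\geq 1$ are more careful than the paper's own write-up, which contains an index transposition in the stated recursion.
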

\begin{proof}
First, let us show that for each $s\in S$ we have $$\mu_{2k}(As)\geq \frac{k}{(k+1)|S^2|\rho(G,S)^2}\mu_{2(k+1)}(A).$$ By definition
$$\mu_{2(k+1)}(As)=\frac{1}{(2k+2)|A(2k+2,S)|}\sum_{t=1}^{2k+2}\sum_{w\in A(2k+2,S)}\delta_{w(t)}(As).$$ We estimate the leftmost sum from below by counting the walks $(a_1,\ldots,a_{t-1},s,s^{-1},a_{t+2},\ldots, a_{2k+2})$ with $(a_1,\ldots,a_{t-1},a_{t+2},\ldots,a_{2k+2})\in A(2k,S)$. If $(a_1,\ldots,a_{t-1},a_{t+2},\ldots,a_{2k+2})$ visits $A$ at time $t-1$ then $(a_1,\ldots,a_{t-1},s,s^{-1},a_{t+2},\ldots, a_{2k+2})$ visits $As$ at time $t$. Hence, for $2\leq t \leq 2k+1$
$$\sum_{w\in A(2k+2,S)}\delta_{w(t)}(As)\geq \sum_{w\in A(2k,S)}\delta_{w(t-1)}(A).$$
We get:
\begin{align*}\mu_{2k+2}(As)&\geq \frac{1}{(2k+2)|A(2k+2,S)|}\sum_{w\in A(2k,S)}\sum_{t=2}^{2k+1}\sum_{w\in A(2k,S)}\delta_{w(t-1)}(A)\\ &=\frac{(2k)|A(2k,S)|}{(2k+2)|A(2k+2,S)|}\mu_{2k}(A).\end{align*} As 
$$\frac{|A(2k,S)|}{|A(2k+2,S)|}\geq \frac{1}{|S|^2\rho(G,S)^2}$$ we get 
$$\mu_{2k}(As)\geq \frac{k\mu_{2(k+1)}(A)}{(k+1)|S^2|\rho(G,S)^2}.$$ It follows that 
\begin{align*}
\nu_{n}(As)+\frac{\mu_{2n+2}(As)}{n}\geq & \frac{n\nu_n(A)}{(n+1)|S^2|\rho(G,S)^2}\\
\nu_{n}(As)\geq \frac{n\nu_n(A)}{(n+1)|S^2|\rho(G,S)^2}-\frac{1}{n}.
\end{align*}
\end{proof}
\begin{proof}[Proof of Theorem \ref{ThmA}]
 Let $H_0$ be a uniformly recurrent subgroup such that $\rho(H_0\backslash G,S)=\rho(G,S)$. By Theorem \ref{t.SpecIneq} we have $\limsup_{n\to\infty}\int |H_0^g\cap S|(-\log\rho(H_0^g,H_0^g\cap S)d\mu_{2n}(g)=0$. Then we also have 
$$\lim_{n\to\infty}\int |H_0^g\cap S|(-\log\rho(H_0^g,H_0^g\cap S)d\nu_{n}(g)=0.$$
Let $\delta_{H_0}$ be the dirac mass in $H_0$ and let $\omega$ be a weak-* limit of measures $\delta_{H_0}*\nu_n$ as $n\to\infty$. Then 
$$\int |H\cap S|(-\log\rho(H,H\cap S))d\omega(H)=0$$
so the set of $H$ such that $\log\rho(H,H\cap S)=0$ has full measure. By Kesten's criterion this is precisely the set of $H$ for which the group $\langle H\cap S\rangle $ is amenable. As the set $S$ is finite, the subset of $H\in \Sub_G$ such that $\langle H\cap S\rangle$ is not amenable is open. 

From Lemma \ref{l.QINV} we deduce that $\omega$ is quasi-invariant, i.e. $\omega(E)=0$ if and only if $\omega(gE)=0$ for all $g\in G$. The support of $\omega$ is a closed $G$-invariant subset of $X$ so by minimality it has to be the whole $X$. In particular any non-empty open set has positive measure. It follows that $\langle H\cap S\rangle$ is amenable for all $H\in X$. By taking $S'=(\{1\}\cup\{S\})^m$ and letting $m$ go to infinity we show in this way that every $H\in X$, including $H_0$,  is amenable.
\end{proof}
\begin{zorollary}
Let $G$ be countable $C^*$-simple group with a finite symmetric generating set $S$. If $H$ is a Ramanujan subgroup of $G$ then there exists a sequence $(g_i)_{i\in\N}$ such that $H^{g_i}$ converges to the identity subgroup in the Chabauty topology.
\end{zorollary}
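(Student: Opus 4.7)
The plan is to mimic the proof of Theorem \ref{ThmA}, applied to the (not necessarily URS) subgroup $H$, and then to extract a genuine URS inside the Chabauty-closure $X=\overline{H^G}$ via a minimality argument before invoking Kennedy's characterization of $C^*$-simplicity.

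First, since $H$ is Ramanujan, $\rho(G,S)=\rho(H\backslash G,S)$, so Theorem \ref{t.SpecIneq} gives $I(H,S)=0$. Passing to the Ces\`aro averages $\nu_n$ from Section \ref{S2} yields
$$\lim_{n\to\infty}\int |H^g\cap S|(-\log\rho(H^g,H^g\cap S))\,d\nu_n(g)=0.$$
Let $\omega$ be a weak-$*$ subsequential limit of the pushforwards $\omega_n=\int \delta_{H^g}\,d\nu_n(g)$ on the compact metrizable space $\Sub_G$. The support of $\omega$ is contained in $X$, and by Lemma \ref{l.QINV} the measure $\omega$ is $G$-quasi-invariant, so $\operatorname{supp}(\omega)$ is a closed $G$-invariant subset of $X$.

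Arguing exactly as in the proof of Theorem \ref{ThmA}, the vanishing of the above integral, together with the fact that the set $\{H'\in\Sub_G:\langle H'\cap S\rangle\text{ is not amenable}\}$ is open in the Chabauty topology, implies that $\langle H'\cap S\rangle$ is amenable for every $H'\in\operatorname{supp}(\omega)$. Replacing $S$ by $S_m=(\{1\}\cup S)^m$ and letting $m\to\infty$ upgrades this to the statement that every $H'\in\operatorname{supp}(\omega)$ is amenable as an abstract subgroup of $G$.

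Finally, by Zorn's lemma applied to closed $G$-invariant subsets, $\operatorname{supp}(\omega)$ contains some minimal closed $G$-invariant subset $Y$. Any $H_0\in Y$ is then, by definition, a URS, and it is amenable by the previous step. Kennedy's theorem (recalled in the introduction) asserts that a countable $C^*$-simple group admits no amenable URS other than $\{e\}$, so $Y=\{\{e\}\}$. In particular $\{e\}\in\overline{H^G}$, and unfolding the Chabauty topology produces a sequence $(g_i)_{i\in\mathbb{N}}$ with $H^{g_i}\to\{e\}$. The main obstacle is essentially bookkeeping: one must check that the measure-theoretic half of the proof of Theorem \ref{ThmA} never used that $H$ itself was a URS---only that $H$ is Ramanujan---and that the ``enlarge $S$'' trick still applies uniformly on $\operatorname{supp}(\omega)$; once that is in place, Kennedy's theorem closes the argument.
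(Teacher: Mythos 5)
Your overall strategy is sound and genuinely different from the paper's. The paper's proof is shorter: it first observes that $X=\overline{\{H^g\mid g\in G\}}$ consists entirely of Ramanujan subgroups (this uses that $\rho(H'\backslash G,S)\geq\rho(G,S)$ always, together with lower semicontinuity of $H'\mapsto\rho(H'\backslash G,S)$ in the Chabauty topology, so Ramanujan-ness passes to Chabauty limits), then takes a minimal closed $G$-invariant $Y\subset X$ by Zorn's lemma, applies Theorem \ref{ThmA} as a black box to the Ramanujan URSs in $Y$, and finishes with Kennedy. You instead re-run the measure-theoretic core of the proof of Theorem \ref{ThmA} on the non-URS subgroup $H$ and only afterwards pass to a minimal subsystem of $\operatorname{supp}(\omega)$. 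You are right that the construction of $\omega$, its quasi-invariance via Lemma \ref{l.QINV}, and the conclusion that $\langle H'\cap S\rangle$ is amenable for all $H'\in\operatorname{supp}(\omega)$ never use minimality of $X$; minimality is only invoked in the paper's proof of Theorem \ref{ThmA} to force $\operatorname{supp}(\omega)=X$. Your route has the advantage of not needing the semicontinuity fact, which the paper uses implicitly without proof.

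However, the step you dismiss as bookkeeping --- ``the enlarge $S$ trick still applies uniformly on $\operatorname{supp}(\omega)$'' --- is a genuine gap as written. The measure $\omega$ is built from the walk measures $\mu_{2n}$, which depend on the generating set; replacing $S$ by $S_m=(\{1\}\cup S)^m$ produces a different limit measure $\omega^{(m)}$ for each $m$, and there is no reason the supports $\operatorname{supp}(\omega^{(m)})$ should be nested or even have a common point. So the argument as stated does not produce a single $H'$ with $\langle H'\cap S_m\rangle$ amenable for every $m$, which is what you need to exhibit an amenable subgroup in $X$. (In the proof of Theorem \ref{ThmA} this problem is invisible because minimality of $X$ forces $\operatorname{supp}(\omega^{(m)})=X$ for every $m$.) The fix is short but requires an actual observation: let $F_m=\{H'\in X:\langle H'\cap S_m\rangle\text{ is amenable}\}$. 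Each $F_m$ is closed (its complement is open since $S_m$ is finite), $G$-invariant, and non-empty because it contains $\operatorname{supp}(\omega^{(m)})$; moreover $F_{m+1}\subset F_m$ since subgroups of amenable groups are amenable. By compactness of $X$ the intersection $F_\infty=\bigcap_m F_m$ is a non-empty closed $G$-invariant set consisting of amenable subgroups, and taking a minimal closed $G$-invariant $Y\subset F_\infty$ and applying Kennedy's theorem gives $Y=\{\{e\}\}$, hence $\{e\}\in X$ as desired. With that insertion your proof is complete and is a legitimate alternative to the paper's.
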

\begin{proof}
The closure $X=\overline{\left\{H^g\mid g\in G\right\}}$ consists of Ramanujan subgroups. By Zorn's lemma there exists a minimal $G$-invariant closed subset $Y\subset X$. By Theorem \ref{ThmA} it consists of uniformly recurrent amenable sugbroups and Kennedy's criterion \cite{Kennedy} yields $Y=\{1\}$ which proves the assertion. 
\end{proof}
\section{Cycle density along random walks}\label{S3}
Let $\mathcal G$ be a $d$-regular graph. For any vertex $x$ and $k\geq 1$ put $C_G(x,k)=1$ if there exists a non-backtracking cycle of length $k$ starting at $x$ and $C_G(x,k)=0$ otherwise. We say that two cycles in $\mathcal G$ are independent if they generate non-abelian free subgroup of the fundamental group of $\mathcal G$. Let $D_{\mathcal G}(x,k)=1$ if there exist at least two independent non-backtracking cycles of length $k$ starting at $x$ and $D_{\mathcal G}(x,k)=0$ otherwise. In this section we prove:
\begin{zeorem}\label{T1S3}
Let $G$ be $d$-regular rooted Ramanujan graph. Let $(X_i)$ be the standard random walk on $G$. Then for any $k\geq 1$ 
$$\lim_{n\to\infty}\frac{1}{n}\sum_{i=1}^n \Eb{C_G(X_i,k)}=0.$$
\end{zeorem}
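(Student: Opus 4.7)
The plan is to realize $\mathcal G$ as a Schreier graph of a virtually free group and to feed the Ramanujan hypothesis into Theorem~\ref{t.SpecIneq}, applied to larger and larger generating sets. First I would fix a group $G_0$ (for instance $G_0=(\mathbb Z/2)^{*d}$) with a symmetric generating set $S$ of cardinality $d$ such that $\Cay(G_0,S)=T_d$, and realize $\mathcal G=H\backslash G_0$. The Ramanujan hypothesis then reads $\rho(H\backslash G_0,S)=\rho(G_0,S)$. For each $m\ge 1$, the ball $S^{(m)}=\{g\in G_0:1\le|g|_S\le m\}$ is a symmetric generating set whose Markov operator is a polynomial in $P_S$, so by functional calculus $\rho(H\backslash G_0,S^{(m)})=\rho(G_0,S^{(m)})$ is inherited. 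Applying Theorem~\ref{t.SpecIneq} with $S^{(m)}$ then yields $I(H,S^{(m)})=0$ for every $m$.

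Next, mimicking the proof of Theorem~\ref{ThmA}, I would pass to the Cesaro-smoothed measures $\nu_n$ of Lemma~\ref{l.QINV} and to a weak-$*$ limit $\omega$ of $\delta_H*\nu_n$ in $\Sub_{G_0}$. The vanishing of $I(H,S^{(m)})$ for all $m$ forces $\omega$-almost every subgroup $L\le G_0$ to have $\langle L\cap S^{(m)}\rangle$ amenable for every $m$, hence $L$ itself amenable by Kesten's criterion; since $G_0$ is virtually free, $\omega$-a.e.\ such $L$ is in fact virtually cyclic. The vertex $Hg$ lies on a non-backtracking $k$-cycle precisely when $H^g$ meets the finite set $R_k$ of reduced words of length $k$, i.e.\ $H^g\in\mathcal C_k:=\{L:L\cap R_k\ne\emptyset\}$. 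Combining the $\omega$-a.e.\ virtual cyclicity with the $G_0$-quasi-invariance of $\omega$ (Lemma~\ref{l.QINV}) and summing over the finitely many $w\in R_k$, I would argue that $\omega(\mathcal C_k)=0$: each conjugacy class of cyclic subgroups $\langle w\rangle$ has a thin $G_0$-orbit in $\Sub_{G_0}$ that no quasi-invariant probability measure can charge positively.

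Finally, to produce the Cesaro random-walk statement, I would bound $\frac{1}{n}\sum_{i=1}^n\mathbb E[C_{\mathcal G}(X_i,k)]$ by the $\nu_n$-mass of the preimage of $\mathcal C_k$; the Ramanujan equality $\rho(\mathcal G)=\rho(T_d)$ equates the exponential recurrence rates on $\mathcal G$ and on $T_d$, and this is exactly strong enough to compare the forward-walk occupation measure on $\mathcal G$ with the push-forward of $\nu_n$ up to subexponential factors. Passing to the limit then gives $\frac{1}{n}\sum_{i=1}^n\mathbb E[C_{\mathcal G}(X_i,k)]\to\omega(\mathcal C_k)=0$. The main obstacle is the second step: pointwise, a cyclic amenable subgroup $\langle w\rangle$ with $w\in R_k$ carries a $k$-cycle, so the vanishing of $I(H,S^{(m)})$ does not rule out $\mathcal C_k$ directly; one must invoke quasi-invariance of $\omega$ together with the finite combinatorial structure of $R_k$ to argue that these cyclic obstructions have vanishing $\omega$-mass.
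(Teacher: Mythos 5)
There is a genuine gap at the step you yourself flag as ``the main obstacle,'' and the proposed fix does not work. From $I(H,S^{(m)})=0$ for all $m$ you can only conclude that $\omega$-almost every subgroup $L$ is amenable (indeed virtually cyclic in your virtually free $G_0$); you then need $\omega(\mathcal C_k)=0$, and you propose to get it from quasi-invariance of $\omega$ because ``thin orbits cannot be charged.'' That claim is false: take $\omega=\sum_{g}c_g\,\delta_{\langle w\rangle^{g}}$, the sum over coset representatives $g\in N(\langle w\rangle)\backslash G_0$, with weights $c_g=Z^{-1}\epsilon^{|g|}$ for small $\epsilon>0$. This is a probability measure supported on cyclic subgroups, it satisfies even the \emph{uniform} quasi-invariance $\omega(Eg)\geq c^{|g|}\omega(E)$ that Lemma \ref{l.QINV} provides in the limit, and yet it gives positive mass to $\mathcal C_{|w|}$. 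So amenability plus quasi-invariance is strictly weaker than what you need; a single short cycle corresponds to a cyclic, hence amenable, local subgroup, which contributes \emph{nothing} to $I(H,S^{(m)})$, and no soft measure-theoretic argument can recover the lost information. The paper's proof resolves exactly this point by a combinatorial upgrade (its Step 2): using the Ramanujan property, the $\ell^\infty$-decay of the walk distribution forces, with positive probability, \emph{two} vertices carrying cycles inside a bounded ball of the stationary limit graph, hence two \emph{independent} non-backtracking cycles through the root of a suitable non-backtracking power $\tilde{\mathcal G}^{(a)}$. Two independent cycles generate a free subgroup of rank $2$ inside $H^g\cap S^k$, whose spectral radius is bounded away from $1$ by a constant $\beta(k,d')<1$, and only then does Theorem \ref{t.SpecIneq} produce a contradiction. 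Without this one-cycle-to-two-independent-cycles step your argument cannot close.

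A secondary but real problem is your final comparison. The measures $\mu_{2n}$ (hence $\nu_n$ and $\omega$) are occupation measures of the walk \emph{conditioned to return} at time $2n$, while the theorem concerns the unconditioned walk $(X_i)$. Your claimed comparison ``up to subexponential factors'' would require $p_{2n-i}(g,e)\gtrsim p_{2n}(e,e)$ uniformly in $g$ and $i$, which fails (e.g.\ by parity, or when $|g|>2n-i$). The paper avoids this entirely by first passing to a stationary random rooted graph built from the \emph{unconditioned} walk (its Step 1), realizing it as a stationary random subgroup of a free group via Gross's theorem, and then using the sphericality of the density of $\mu_{2n}$ on the free group together with stationarity to reduce the integral in Theorem \ref{t.SpecIneq} to an expectation at the root. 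Your choice of $G_0=(\mathbb Z/2)^{*d}$ to handle odd degree is a reasonable alternative to the paper's passage to the even-degree graph $\mathcal G^{(a)}$, but the two gaps above mean the proof as proposed does not go through.
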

Write $\mathbb T_d$ for the $d$-regular rooted tree. If $\mathcal G=(V,E)$ we shall write $\mathcal G^{(k)}$ for the graph vertex set $V$ with multiple edges where the number of  edges between  $v_1,v_2)\in V\times V$  is given by $$w(v_1,v_2)=|\{(e_0,e_k)| (e_0,e_1,\ldots,e_k) \textrm{ is a non-backtracking 
walk in } \mathcal G\}|.$$ Graph $\mathcal{G}^{(k)}$ is always a $d(d-1)^{k-1}$-regular graph and $C_{\mathcal G}(x,k)=C_{\mathcal G^{(k)}k}(x,1)$. We have
\begin{zemma}\label{L2S3}
$\mathcal G$ is a Ramanujan graph if and only $\mathcal G^k$ is. 
\end{zemma}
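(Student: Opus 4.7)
The strategy is to express the adjacency operator of $\mathcal G^{(k)}$ as a fixed polynomial in the adjacency operator of $\mathcal G$ and transfer the Ramanujan condition through this polynomial via spectral calculus. Let $A$ denote the adjacency operator of $\mathcal G$ on $\ell^2(\mathcal G)$ and let $N_k$ be the operator whose $(u,v)$-entry equals the number of non-backtracking walks of length $k$ from $u$ to $v$. A direct counting argument for non-backtracking walks in a $d$-regular graph yields the Chebyshev-type recurrence
\begin{equation*}
N_0=I,\quad N_1=A,\quad N_2=A^2-dI,\quad N_{k+1}=AN_k-(d-1)N_{k-1}\ \text{ for }k\geq 2.
\end{equation*}
By induction $N_k=p_k(A)$ for a polynomial $p_k$ of degree $k$ depending only on $d$. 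Dividing by the degree $d_k:=d(d-1)^{k-1}$ of $\mathcal G^{(k)}$ gives $P^{(k)}=q_k(P)$ with $q_k(x)=p_k(dx)/d_k$, where $P=A/d$ is the Markov operator of $\mathcal G$. The derivation uses only $d$-regularity, so the same polynomials describe $\mathbb T_d^{(k)}$ in terms of $\mathbb T_d$.

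Since $P$ is self-adjoint, the spectral theorem gives $\rho(\mathcal G^{(k)})=\|q_k(P)\|=\sup_{\lambda\in\mathrm{Spec}(P)}|q_k(\lambda)|$. For $\mathbb T_d$ the spectrum of $P_{\mathbb T_d}$ is the interval $[-\rho_d,\rho_d]$ with $\rho_d=2\sqrt{d-1}/d$, so $\rho(\mathbb T_d^{(k)})=\sup_{|\lambda|\leq\rho_d}|q_k(\lambda)|$. The Chebyshev parametrization $dx=2\sqrt{d-1}\cos\theta$ for $|x|\leq\rho_d$ and $dx=\pm 2\sqrt{d-1}\cosh\phi$ for $|x|\geq\rho_d$ turns $p_k$ into an explicit trigonometric/hyperbolic expression. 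From it one reads off (i) that $|q_k|$ attains its maximum on $[-\rho_d,\rho_d]$ at an endpoint, with value equal to $2\sqrt{d_k-1}/d_k=\rho(\mathbb T_{d_k})$, so that $\mathbb T_d^{(k)}$ is itself Ramanujan; and (ii) that $|q_k(x)|$ is strictly increasing in $|x|$ once $|x|\geq\rho_d$.

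The equivalence now follows. If $\mathcal G$ is Ramanujan then $\|P\|=\rho_d$, so $\mathrm{Spec}(P)\subseteq[-\rho_d,\rho_d]$, and (i) gives $\rho(\mathcal G^{(k)})\leq\rho(\mathbb T_{d_k})$; combined with the universal lower bound $\rho(\mathcal G^{(k)})\geq\rho(\mathbb T_{d_k})$ valid for any $d_k$-regular graph, this forces equality, so $\mathcal G^{(k)}$ is Ramanujan. Conversely, if some spectrum of $P$ lay outside $[-\rho_d,\rho_d]$ then (ii) would yield $\rho(\mathcal G^{(k)})>\rho(\mathbb T_{d_k})$, contradicting the Ramanujan property of $\mathcal G^{(k)}$; so $\|P\|\leq\rho_d$, and combined with the reverse Kesten bound $\|P\|\geq\rho_d$ this shows $\mathcal G$ is Ramanujan.

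The only non-routine step is the Chebyshev analysis of $q_k$ needed for (i) and (ii); everything else is spectral calculus plus the standard comparison with the universal cover. I expect the main obstacle to be tracking the extremal behaviour of $q_k$ cleanly, particularly verifying the monotonicity in $|x|$ outside $[-\rho_d,\rho_d]$ without having to write out the full closed-form expression for $p_k$.
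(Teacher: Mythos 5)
Your overall architecture -- writing the Markov operator of $\mathcal G^{(k)}$ as a polynomial $q_k(P)$ via the non-backtracking recurrence and pushing the Ramanujan condition through the spectral calculus -- is exactly the route the paper intends; the paper compresses it into the single sentence that $\mathbb T_d^{(k)}=\mathbb T_{d(d-1)^{k-1}}$ and that $\rho(\mathcal G^{(k)})$ is a monotone function of $\rho(\mathcal G)$. Your recurrence, the spectral mapping step, and the monotonicity claim (ii) are all fine: for $|x|\geq\rho_d$ one can write $dx=\pm\sqrt{d-1}(s+s^{-1})$ with $s\geq 1$ and check that $|p_k(dx)|=(d-1)^{k/2}\bigl(s^k+s^{-k}+(1-\tfrac{1}{d-1})\sum_{j=1}^{k-1}s^{k-2j}\bigr)$, which is strictly increasing in $s$.

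The genuine gap is claim (i): the maximum of $|q_k|$ on $[-\rho_d,\rho_d]$ is \emph{not} $\rho(\mathbb T_{d_k})$, and $\mathbb T_d^{(k)}$ is \emph{not} a $d_k$-regular Ramanujan graph once $k\geq 2$. Take $d=3$, $k=2$: then $N_2=A^2-3I$, $d_2=6$, and $\sup_{\lambda\in[-2\sqrt2,2\sqrt2]}|\lambda^2-3|=5$, whereas $2\sqrt{d_2-1}=2\sqrt5\approx 4.47$; hence $\rho(\mathbb T_3^{(2)})=5/6>2\sqrt5/6=\rho(\mathbb T_6)$. (Indeed $\mathbb T_d^{(k)}$ is the distance-$k$ graph of the tree, which is full of triangles and is not a tree, so there is no reason for it to be spectrally extremal among $d_k$-regular graphs.) Your forward implication therefore collapses: for an infinite Ramanujan $\mathcal G$ the spectrum of $P$ is a closed set with $\sup|\mathrm{Spec}(P)|=\rho_d$, so $\rho_d$ or $-\rho_d$ lies in it, and since $|q_k|$ is even this forces $\rho(\mathcal G^{(k)})\geq|q_k(\rho_d)|>\rho(\mathbb T_{d_k})$ -- the opposite of what you need. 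To be clear, this is not a slip you introduced on top of a correct target: the paper's own one-line proof rests on the identification $\mathbb T_d^{k}=\mathbb T_{d(d-1)^{k-1}}$, which fails in exactly the same way, and the lemma as literally stated (with Ramanujan meaning $\rho=2\sqrt{d_k-1}/d_k$) is false. What your machinery does prove is the corrected statement: $\mathcal G$ is Ramanujan if and only if $\rho(\mathcal G^{(k)})=\rho(\mathbb T_d^{(k)})=\sup_{[-\rho_d,\rho_d]}|q_k|$; both directions then follow from spectral mapping, the endpoint-maximum claim, and your monotonicity (ii), with no need for the explicit value $2\sqrt{d_k-1}/d_k$. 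Whether that weaker statement still suffices for Step 5 of the proof of Theorem \ref{T1S3}, which uses the Ramanujan property of $\tilde{\mathcal G}^{(a)}$ in the $d_a$-regular (Schreier graph of a free group) sense, is a separate problem with the paper and not something your proof could be expected to repair.
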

\begin{proof}
Since $\mathbb T_d^k=T_{d(d-1)^{k-1}}$ it enough to observe that $\rho(\mathcal G^k)$ is a strictly decreasing function of $\rho(\mathcal G)$. 
\end{proof}

We will use the notion of a \textbf{stationary random graph}. We think of the $d$-regular random graphs as the Borel probability measures on the space of isomorphism classes of rooted $d$-regular graphs. For more comprehensive introduction to random graphs we refer to \cite{AGV12}. A random, rooted $d$-regular graph $(\tilde{\mathcal G},\tilde{x})$ is called stationary if its probability distribution is invariant under replacing the root $\tilde x$ by a random neighbor. If $H$ is a random subgroup of a group $G$ satisfying $\Eb{f(H)}=\frac{1}{|S|}\sum_{s\in S}\Eb{f(H^s)}$ for every continuous function $f$ on $\Sub_G$ then we will call $H$ a \textbf{stationary random subgroup}. The Schreier graph $\Sch(H\bs G,S)$ rooted at the identity coset is an example of a stationary random graph. 
\begin{lemma}\label{L3S3}
Let $(\mathcal{G},o)$ be a $d$-regular stationary random graph, then $(\mathcal{G}^k,o)$ is a $d(d-1)^{k-1}$-regular stationary random graph.
\end{lemma}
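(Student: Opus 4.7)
The plan is to interpret stationarity of $(\mathcal{G}^{(k)},o)$ as an averaging identity on $\mathcal{G}$ itself and then deduce it from the given stationarity of $(\mathcal{G},o)$ via a three-term recurrence on non-backtracking walk counts.

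First I would unwind the definitions. Since $\mathcal{G}^{(k)}$ shares the vertex set of $\mathcal{G}$, and the multi-edges at a vertex $v$ correspond bijectively to non-backtracking walks of length $k$ starting at $v$ in $\mathcal{G}$, a uniform neighbor of $o$ in $\mathcal{G}^{(k)}$ is exactly the endpoint $v_k$ of a uniformly random non-backtracking walk $(o=v_0,v_1,\dots,v_k)$ in $\mathcal{G}$. The rooted multigraph $(\mathcal{G}^{(k)},v)$ is moreover a deterministic function of the rooted graph $(\mathcal{G},v)$. Thus stationarity of $(\mathcal{G}^{(k)},o)$ reduces to the statement
$$\mathbb{E}\bigl[F(\mathcal{G},o)\bigr] = \mathbb{E}\bigl[T_k F(\mathcal{G},o)\bigr]$$
for every bounded Borel $F$ on rooted $d$-regular graphs, where $T_k$ is the non-backtracking averaging operator $T_k F(G,o) := \tfrac{1}{d(d-1)^{k-1}}\sum F(G,v_k)$, the sum running over all non-backtracking walks of length $k$ starting at $o$ in $G$.

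Next I would establish an operator recurrence valid on every $d$-regular graph. Writing $B_k(o,v)$ for the number of non-backtracking walks of length $k$ from $o$ to $v$ and $A$ for the adjacency operator, the sum $AB_k(o,v)=\sum_{u\sim v}B_k(o,u)$ counts NB walks of length $k$ from $o$ ending at some neighbor $u$ of $v$, each extended by a single step $u\to v$. Those extensions that fail to be non-backtracking are exactly the ones whose penultimate vertex equals $v$, and these are in bijection with pairs consisting of an NB walk of length $k-1$ from $o$ to $v$ together with one of the $d-1$ non-backtracking continuations out of $v$. This gives $B_{k+1} = AB_k - (d-1)B_{k-1}$, which after normalization becomes the operator identity
$$T_{k+1} = \tfrac{d}{d-1}\,MT_k - \tfrac{1}{d-1}\,T_{k-1},$$
where $M=A/d$ is the simple random walk operator, with base cases $T_0=I$ and $T_1=M$.

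Finally I would close by induction on $k$. Let $\mu$ denote the law of $(\mathcal{G},o)$; stationarity gives $\mu(MG)=\mu(G)$ for every bounded Borel $G$, applied in particular to $G=T_kF$. The cases $k=0$ and $k=1$ hold by definition and by hypothesis respectively. Assuming $\mu(T_{k-1}F)=\mu(T_kF)=\mu(F)$ for all bounded Borel $F$, integrating the recurrence against $\mu$ yields
$$\mu(T_{k+1}F) = \tfrac{d}{d-1}\mu(MT_kF) - \tfrac{1}{d-1}\mu(T_{k-1}F) = \tfrac{d}{d-1}\mu(F) - \tfrac{1}{d-1}\mu(F) = \mu(F),$$
completing the induction. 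The only mildly delicate point is the combinatorial recurrence for $B_k$, but it is purely local, requiring only $d$-regularity and no further hypothesis on $\mathcal{G}$.
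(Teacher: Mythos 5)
Your argument is correct and takes essentially the same route as the paper, whose one-line proof is that the averaging operator $P^{(k)}$ of $\mathcal{G}^{(k)}$ is a polynomial in $P$ and hence also fixes the distribution; your three-term recurrence and induction are precisely that assertion made explicit (together with the implicit fact that the polynomial is stochastic, i.e.\ its coefficients sum to $1$). One pedantic caveat: the unnormalized recurrence $B_{k+1}=AB_k-(d-1)B_{k-1}$ requires the coefficient $d$ rather than $d-1$ when $k=1$ (a length-$0$ walk at $v$ has $d$ continuations, not $d-1$), but the normalized recurrence $T_{k+1}=\tfrac{d}{d-1}MT_k-\tfrac{1}{d-1}T_{k-1}$ that you actually induct on is valid for all $k\ge 1$ with $T_0=I$, $T_1=M$, so the proof stands.
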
\begin{proof}Let $P,P^{(k)}$ be the averaging operators associated to the standard random walks on $\mathcal G,\mathcal G^{(k)}$ respectively. Since $(\mathcal G,o)$ is stationary the operator $P$ fixes its probability distribution. The operator $P^{(k)}$ is a polynomial in $P$ so it also fixes the distribution of $(\mathcal{G}^{(k)},o)$.
\end{proof}
\begin{proof}[Proof of Theorem \ref{T1S3}]
We argue by contradiction. We start with a Ramanujan graph and a natural number $k$ for which the conclusion does not hold. In the first step we construct a stationary random Ramanujan graph where the root is contained in a $k$-cycle with positive probability. In the second step we use Lemma \ref{L2S3} to upgrade it to a stationary random Ramanujan graph of even degree where the root is contained in two independent cycles of length $ak$ with positive probability. Regular graphs of even degree are Schreier graphs on cosets of a free group, this allows to reduce our problem to Theorem \ref{t.SpecIneq} in steps 3,4 and 5. 

\textbf{ Step 1.} We replace $\mathcal G$ by a stationary random graph. Let 
$$0<\alpha=\limsup_{n\to\infty}\frac{1}{n}\sum_{i=1}^n \Eb{C_{\mathcal G}(X_i,k)}.$$ There exists an increasing sequence $(n_i)_{i\in \N}$ such that $\alpha=\lim_{i\to\infty}\frac{1}{n_i}\sum_{j=1}^{n_i} \Eb{C_{\mathcal G}(X_j,k)}.$
For each $i\geq 0$ let $(\mathcal G,X_i)$ be a random rooted graph where root is given by the position of the random walk at time $i$. Let $(\tilde{\mathcal G},\tilde{x})$ be any weak limit of the sequence of random graphs $$\frac{1}{n_i}\sum_{j=1}^{n_i} (\mathcal G,X_j).$$ Then $(\tilde{\mathcal G},\tilde x)$ is a stationary random graph and $\Eb{C_{\tilde{\mathcal G}}(\tilde{x},k)}=\alpha>0$.  Moreover, since $\mathcal G$ was Ramanujan $\tilde{\mathcal G}$ is Ramanujan almost surely.

\textbf{Step 2.} We show that there exists $a>1$ such that $\Eb{D_{\tilde{\mathcal G}^{(a)}}(\tilde{x},k)}>0$. Since $D_{\tilde{\mathcal G}^{(a)}}(\tilde{x},k)=D_{\tilde{\mathcal G}^{(ka)}}(\tilde{x},1)$ we may assume in this step that $k=1$. First, we claim that for $A$ big enough the ball $B_{\tilde{\mathcal G}}(\tilde x, A)$ contains at least two vertices $y_1,y_2$ such that  $C_{\tilde{\mathcal G}}(y_1,1)=C_{\tilde{\mathcal G}}(y_2,1)=1$. Let $(\tilde{X}_i)$ denote the random walk on $\tilde{\mathcal G}$. By stationarity we have $(\tilde{\mathcal G},\tilde{x})=(\tilde{\mathcal G},\tilde{X_i})$ for all $i\in\N$. Since $\tilde{\mathcal G}$ is Ramanujan almost surely, the $l^\infty$ norm of the probability distribution of $\tilde X_i$ decreases exponentially fast with $i$. Therefore there exists an $A>0$ such that for almost all $(\tilde{\mathcal G},\tilde x)$ and for every vertex $v\in \tilde{\mathcal G}$ we have $\Pb{\tilde X_A=v}<\frac{\alpha}{3}.$  Then, the equality $\Eb{C_{\tilde{\mathcal G}}(\tilde{X}_A,k)}=\Eb{C_{\tilde{\mathcal G}}(\tilde{x},k)}=\alpha$ implies that with positive probability $(\tilde{\mathcal G},\tilde x)$ is such that  there are at least $2$ possible values for $\tilde{X}_A$ where $C(\tilde{X}_A,1)=1$. That proves the claim. If the ball  $B_{\tilde{\mathcal G}}(\tilde x, A)$ contains two distinct vertices with loops, then we can construct two independent non-backtracking cycles of length $2A+1$ starting at $\tilde x$. Thus $\Eb{D_{\tilde{\mathcal G}^{(2A+1)}}(\tilde{x},1)}>0$. It is enough to take $a\geq 2A+1$. By Lemma \ref{L3S3} the random graph $(\tilde{\mathcal G}^{(a)},\tilde x)$ is a stationary random graph.

\textbf{Step 3.} Put $2d'=d(d-1)^{a-1}$, $d'$ is an integer since $a\geq 2$. We use the random graph $(\tilde{\mathcal G}^{(a)},\tilde x)$ to construct a random Schreier graph $\Sch(H\backslash F_{d'},S)$ where $F_{d'}$ is the free group on $d'$ generators, $S$ the standard symmetric free generating set and  $H$ a stationary random subgroup of $F_{d'}$. Let $\tilde Y_i$ be the standard random walk on $(\tilde{\mathcal G}^{(a)},\tilde x)$. This is a different walk than $\tilde X_i$ since we have modified the edge set. By Fatou lemma and stationarity  $$\Eb{\limsup_{n\to\infty}\frac{1}{n}\sum_{i=1}^n D_{\tilde{\mathcal G}^{(a)}}(\tilde Y_i,k)}\geq \Eb{D_{\tilde{\mathcal G}^{(a)}}(\tilde x,k)}>0.$$
In particular there exist (deterministic) $d'$-regular Ramanujan graph $(\mathcal G_1,x_1)$ such that \begin{equation}\label{EQ1S3}\limsup_{n\to\infty}\frac{1}{n}\sum_{i=1}^n D_{\tilde{\mathcal G_1}}(\tilde Y_i,k)>0.\end{equation}
 Note that the degree of $\mathcal G_1$ is even so by \cite{Gross} it is isomorphic to a Schreier graph. Hence, there exists a subgroup $H_0\subset F_{d'}$ such that $(\mathcal G_1,x_1)\simeq \Sch(H_0\backslash F_{'d'},S)$. We construct a stationary random subgroup $H$ as a weak-* limit of $\frac{1}{n}\sum_{i=1}^n\frac{1}{|S|^i}\sum_{s\in S^i}H^s$ along a subsequence for which the limit superior (\ref{EQ1S3}) converges to a positive number. Then we have $\Eb{D_{\Sch(H\backslash F_{'d'},S)}(H,k)}>0$.
 
 \textbf{Step 4.} We reinterpret the condition $\Eb{D_{\Sch(H\backslash F_{d'},S)}(H,k)}>0$ in terms of the expected spectral radius. Let $H_1$ be any deterministic subgroup of $F_{d'}$. Any two independent non-backtracking $k$-cycles $c_1,c_2$ in $\Sch(F_{d'}/H_1,S)$ starting at $H_1$ give rise to two elements $a,b\in S^k\cap H$ generating freely a free subgroup. Hence there is $\beta=\beta(k,d')<1$ such that $D_{\Sch(F_{'d'}/H_1,S)}(H_1,k)>0$ implies $\rho(H_1 ,H_1\cap S^k)\leq \beta$. We deduce that $\Eb{-\log\rho(H_1,H_1\cap S^k)}>0$. 
 
 \textbf{Step 5.} We use Theorem \ref{t.SpecIneq} to get a contradiction. The graph $\Sch(F_{d'}/H,S)$ is Ramanujan almost surely so by Theorem \ref{t.SpecIneq}
\begin{equation}\label{ineq3}\lim_{n\to\infty}-\EB{\int_{F_{d'}}|H^g\cap S^k|\log\rho(H^g, H^g\cap S^k\rangle)d\mu_{2n}(g)}=0.
\end{equation}
The density function of $\mu_{2n}$ for the free group and the standard symmetric generating set is a spherical function on $F_{d'}$ (its value depends only on the distance from the root). Hence, we can use the property that $H$ is stationary to get 
$$-\EB{\int_{F_{d'}}|H^g\cap S^k|\log\rho(H^g, H^g\cap S^k\rangle)d\mu_{2n}(g)}=-\Eb{|H\cap S^k|(-\log\rho(H_1,H_1\cap S^k))},$$ which together with (\ref{ineq3}) contradicts the conclusion of the fourth step.
\end{proof}
Using the same reasoning we can show 
\begin{theorem}\label{tstationary}
Let $(\mathcal{G},o)$ be stationary random $d$-regular graph. If $(\mathcal{G},o)$ is Ramanujan almost surely then it is a $d$-regular tree. 
\end{theorem}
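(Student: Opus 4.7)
The plan is to argue by contradiction along the lines of the proof of Theorem \ref{T1S3}, but skipping its Step 1 (the passage from a deterministic graph to a stationary random graph) since $(\mathcal{G}, o)$ is already stationary. Suppose it is Ramanujan almost surely but not almost surely a $d$-regular tree. The one new ingredient required is to show that some $k \geq 1$ satisfies $\Eb{C_{\mathcal{G}}(o, k)} > 0$. To obtain this, I would first note that ``$\mathcal{G}$ is not a $d$-regular tree'' means that with positive probability the component of $o$ contains some cycle, so by countable additivity there exist deterministic $r, \ell$ such that with positive probability a non-backtracking cycle of length at most $\ell$ lies within distance $r$ of $o$. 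The hard part will be translating this into $\Eb{C_{\mathcal{G}}(o, k)} > 0$ for some $k$: I plan to reuse the ball argument from Step 2 of the proof of Theorem \ref{T1S3}, namely that since $\mathcal{G}$ is Ramanujan the $\ell^\infty$ norm of the distribution of $X_n$ decays exponentially, so for $n$ large the walk at time $n$ assigns small mass to any single vertex; combining this with stationarity $(\mathcal{G}, o) \stackrel{d}{=} (\mathcal{G}, X_n)$ and the positive-probability existence of a cycle within distance $r$ of $o$ forces $o$ itself to lie on a non-backtracking cycle of bounded length with positive probability.

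Once this positivity is established, I would run Steps 2--5 of the proof of Theorem \ref{T1S3} essentially verbatim: upgrade to $\Eb{D_{\mathcal{G}^{(a)}}(o, 1)} > 0$ for some $a \geq 1$ via the two-vertices-with-loops argument in a ball of radius $A$; invoke Lemma \ref{L3S3} to see that $(\mathcal{G}^{(a)}, o)$ is a stationary random graph of even degree $2d' = d(d-1)^{a-1}$; apply Gross's theorem to represent it as a random Schreier graph $\Sch(H \backslash F_{d'}, S)$ for a stationary random subgroup $H$ of the free group $F_{d'}$; deduce $\Eb{-\log \rho(H, H \cap S^k)} > 0$ from the presence of two independent cycles through the root generating a rank-two free subgroup; and finally apply Theorem \ref{t.SpecIneq} together with the sphericity of $\mu_{2n}$ on $F_{d'}$ and the stationarity of $H$ to simplify the resulting integral and force the contradictory equality $\Eb{-\log \rho(H, H \cap S^k)} = 0$. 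The main obstacle is therefore the analogue of Step 1: converting the qualitative ``not a tree'' assumption into the quantitative statement that the root lies on a short non-backtracking cycle with positive probability.
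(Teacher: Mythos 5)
Your overall strategy is sound and in substance the same as the paper's: everything reduces to the machinery behind Theorem \ref{T1S3}. The difference is one of packaging. The paper does not re-run Steps 2--5 at all; it extracts a single $k$ with $\Eb{C_{\mathcal G}(o,k)}=\alpha>0$, uses stationarity to get $\Eb{C_{\mathcal G}(X_n,k)}=\alpha$ for every $n$, and then applies Fatou's lemma to conclude that a positive-probability set of realizations of $(\mathcal G,o)$ are deterministic Ramanujan graphs violating Theorem \ref{T1S3}, which is invoked as a black box. Your plan of inlining Steps 2--5 directly on the stationary graph would also work, and it avoids the back-and-forth between the random graph and a deterministic realization, but it is longer than necessary.

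The one place where your proposal goes off track is precisely the step you single out as ``the main obstacle.'' It is not an obstacle, and the tool you propose for it is the wrong one. Since $(\mathcal G,o)$ is a rooted, connected graph, if a realization is not a tree it contains a non-backtracking cycle of some length $\ell$ at some distance $r$ from $o$; concatenating a geodesic from $o$ to a nearest vertex of that cycle, the cycle itself (started at a suitable edge), and the reversed geodesic yields a non-backtracking closed walk of length $2r+\ell$ based at $o$, which is exactly what $C_{\mathcal G}(o,2r+\ell)=1$ means in this paper (compare the construction of the length-$(2A+1)$ cycles in Step 2). Countable additivity over the pairs $(r,\ell)$ then gives a deterministic $k$ with $\Eb{C_{\mathcal G}(o,k)}>0$. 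No Ramanujan input is needed here. By contrast, the $\ell^\infty$-decay argument you want to import from Step 2 cannot do this job: that argument takes $\Eb{C_{\tilde{\mathcal G}}(\tilde x,k)}=\alpha>0$ as its \emph{hypothesis} and uses the spreading of the walk to find \emph{two} cycle-carrying vertices in a ball; it has no mechanism for moving a cycle from a distant vertex onto the root, so as written your derivation of $\Eb{C_{\mathcal G}(o,k)}>0$ is circular. Once this step is replaced by the elementary observation above, the rest of your outline (Steps 2--5, Lemma \ref{L3S3}, Gross's theorem, Theorem \ref{t.SpecIneq}) goes through as you describe.
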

\begin{proof}
We argue by contradiction. Let $(\mathcal{G},o)$ be a stationary random $d$-regular graph which is almost surely Ramanujan but is not a tree with positive probability. Then, there exists $k$ such that $\Eb{C_{\mathcal G}(o,k)}=\alpha>0$. Let $(X_n)_{n\in\N}$ be the nearest-neighbor random walk on $\mathcal G$. Since $(\mathcal G,o)$ is stationary we have $\Eb{C_{\mathcal{G}}(X_n,k)}=\Eb{C_{\mathcal G}(o,k)}=\alpha$. By standard application of Fatou lemma we get that 
$$\Eb{\limsup_{i\to\infty}\frac{1}{n_i}\sum_{j=1}^{n_i} C_{\mathcal G}(X_j,k)}\geq \alpha.$$ As the graph $(\mathcal G,o)$ is Ramanujan almost surely we deduce that $(\mathcal{G},o)$ is a counter-example for Theorem \ref{T1S3} with positive probability. This gives the desired contradiction. 
\end{proof}
\bibliographystyle{plain}
\bibliography{ref}
\end{document}